\theoremstyle{plain}
				\newtheorem{quest}{Question}
				\newtheorem{thm}{Theorem}[section]
				\newtheorem{prop}[thm]{Proposition}
				\newtheorem{lem}[thm]{Lemma}
\theoremstyle{definition}		\newtheorem{df}[thm]{Definition}
\newtheorem{rem}[thm]{Remark}
	\theoremstyle{remark} 	
\title{On what finitely generated (left-orderable) simple groups can know about their subgroups}
\author{Arman Darbinyan}
\address{{University of Southampton, UK}}
\email{a.darbinyan@soton.ac.uk}
\author{Markus Steenbock }
\address{Universit\"at Wien, Austria}
\email{markus.steenbock@univie.ac.at}
\subjclass[2010]{20F60, 20E32, 20F10}
\keywords{embedding theorems, left-ordered groups, simple groups, word problem, computability on groups}
\begin{document}

\begin{abstract}
In this paper, we survey some of the recent advances on embeddings into finitely generated (left-orderable) simple group such that the overgroup preserves algorithmic, geometric, or algebraic information about the embedded group. We discuss some new consequences and also extend some of those embedding theorems to countable classes of finitely generated groups.
\end{abstract}

 \maketitle

 \section{Introduction}
Studies on embeddings of countable groups into finitely generated groups that are either simple or left-orderable have a long history. For example, a famous theorem of Neumann states that any countable left-orderable group embeds into a 2-generated left-orderable group \cite{neumann_embedding_1960}. A theorem of Hall, whose proof, similar to the proof of Neumann, uses wreath product techniques, states that every countable group embeds into a finitely-generated simple group \cite{hall_embedding_1974}. Later, using techniques based on so called free-constructions, alternative proofs to the theorem of Hall were obtained by Gorjushkin \cite{gorjuskin_imbedding_1974} and Schupp \cite{schupp_embeddings_1976}. A natural question on such embeddings asks what extra properties those embeddings can preserve? A famous result in this direction is by Thompson 
\cite{thompson_word_1980}, which states that every finitely generated group $G$ with decidable word problem Frattini embeds into a finitely-generated simple group $H$ that also has decidable word problem.  
 In particular, this means that $H$ does not distort the conjugacy structure of ${G}$. 
The {original} proof of Thompson's theorem uses properties of homeomorphism groups of Cantor sets {in an essential way}.

 \subsection{Embeddings into simple groups}
 
  Motivated by the aforementioned results, we ask what other properties can embeddings into finitely-generated (left-orderable) simple groups preserve? In addition, it is natural to ask which groups embed into finitely-generated groups that are simultaneously left-orderable and simple, and what properties can such embeddings preserve? The theorems formulated below are contributions in those directions. Theorem \ref{thm-0}, Proposition \ref{prop-0} and Theorem \ref{thm-finitization} directly follow from \cite{ourpaper}. Theorems \ref{thm-new-2}, \ref{T: family computable left order} and \ref{T: family word problem} are proved in  Sections 2 and 3.
 
 \begin{thm}[Darbinyan-Steenbock, \cite{ourpaper}]
 \label{thm-0}
 If $G$ is a countable left-orderable group, then $G$ has a Frattini embedding into a finitely generated left-orderable simple group $H$. Moreover, if $G$ is finitely generated, then the embedding can be chosen to be isometric.
 \end{thm}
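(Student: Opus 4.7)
\smallskip

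\noindent\textbf{Proof plan.} The plan is to realize $(G,<)$ inside $\homeo_+(\mathbb{R})$ by its dynamical realization and then to enlarge it to a finitely generated simple left-orderable group whose additional generators act so as not to create new conjugacies among elements of $G$. Let $\rho : G \hookrightarrow \homeo_+(\mathbb{R})$ be a dynamical realization associated with a fixed left order on $G$; any ambient group inside $\homeo_+(\mathbb{R})$ that contains $\rho(G)$ is automatically left-orderable, so this reduces the problem to constructing a suitable overgroup $H \le \homeo_+(\mathbb{R})$ with the required algebraic features.

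\smallskip

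\noindent\textbf{Steps.} First, I would set $H = \langle \rho(G), f_1, \dots, f_k\rangle \le \homeo_+(\mathbb{R})$ with the $f_i$ chosen so that (i) $H$ is finitely generated and simple, drawing on recent constructions of finitely generated simple left-orderable subgroups of $\homeo_+(\mathbb{R})$ (Hyde--Lodha and their variants), and (ii) the $f_i$ have supports disjoint from a distinguished interval $I$ on which $\rho(G)$ still acts faithfully. Second, to verify the Frattini property I would exploit (ii): if $h \in H$ satisfies $h \rho(g_1) h^{-1} = \rho(g_2)$, then restriction to $I$ forces $h|_I$ to coincide with some $\rho(g)|_I$, so $h \rho(g)^{-1}$ is supported off $I$ and therefore commutes with $\rho(G)$; conjugation by $h$ then agrees with conjugation by $\rho(g)$, yielding $g_1 \sim_G g_2$. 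Third, simplicity of $H$ would be extracted from a standard fragmentation/commutator calculus in $\homeo_+(\mathbb{R})$ applied to the $f_i$-part, while left-orderability is inherited from the ambient group.

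\smallskip

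\noindent\textbf{Isometric case and main obstacle.} For the isometric statement when $G$ is finitely generated with generating set $S$, I would arrange the $f_i$ to act on pairwise disjoint regions far from $\supp(\rho(S))$ and with sufficiently expanding dynamics, so that a ping-pong/length argument in $\homeo_+(\mathbb{R})$ forbids any word on $S \cup \{f_1,\dots,f_k\}$ representing an element of $G$ to be shorter than the corresponding word on $S$ alone. The main obstacle is the tension between simplicity, which demands a rich supply of conjugations inside $H$, and the Frattini condition, which forbids new conjugacies among elements of $G$; the separation-of-supports idea above is the crucial device that would allow these two conditions to coexist. The hardest part will be showing that this separation is compatible with $H$ being both finitely generated and simple, rather than merely a subgroup of $\homeo_+(\mathbb{R})$ with proper normal subgroups carrying the $\rho(G)$-dynamics, and that the isometric bookkeeping is not destroyed by whatever conjugations are needed to collapse all normal subgroups of $H$.
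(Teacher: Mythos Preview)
The paper does not actually prove this theorem; it is quoted from \cite{ourpaper}, and from the surrounding discussion (Section~\ref{S:geomeric embeddings}) one sees that the construction there proceeds in two stages: first a Neumann-type double unrestricted wreath product $G\wr\mathbb{Z}\wr\mathbb{Z}$ is used to place the countable group inside a $2$-generated left-orderable group $\widetilde H$, and then a separate embedding theorem from \cite{ourpaper} puts $\widetilde H$ into a finitely generated simple left-orderable group. Your dynamical-realization approach is conceptually different, but as written it has two genuine gaps.

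First, finite generation fails in the countable case. You set $H=\langle \rho(G), f_1,\dots,f_k\rangle$; if $G$ is countable but not finitely generated, neither is $\rho(G)$, and adjoining finitely many $f_i$ does not repair this. Some mechanism (such as the wreath-product step the paper actually uses) is needed to absorb an infinite generating set of $G$ into finitely many generators of $H$ before one can even begin discussing simplicity.

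Second, and more seriously, the separation-of-supports device you rely on is incompatible with simplicity, not merely in tension with it. If the $f_i$ are supported off the interval $I$ and $\rho(G)$ acts (faithfully) on $I$, then the restriction map $H\to\homeo_+(I)$ is a group homomorphism whose kernel contains all the $f_i$ and whose image contains $\rho(G)$. Thus the kernel is a nontrivial proper normal subgroup of $H$, so $H$ is never simple under your hypotheses. Any genuine proof must allow enough interaction between $\rho(G)$ and the extra generators to kill this kernel, which in turn destroys the clean ``$h|_I\in\rho(G)|_I$'' argument you give for the Frattini property. This is exactly why the actual construction does not try to enforce disjoint supports, and why the Frattini and isometric statements in \cite{ourpaper} require substantially more work than a support-restriction trick.
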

  
An embedding $\phi$ of $G$ into $H$ is called \emph{Frattini} if any two elements of $G$ are conjugate in $G$ if and only if their images under $\phi$ are conjugate in $H$. In this case, we also say that \emph{$G$ Frattini embeds into $H$}. One can think about a {Frattini embedding} $G \hookrightarrow H$ as such that the group $H$ carries extra algebraic knowledge about the embedded group $G$ that is reflected in preserving the conjugacy structure of the embedded group by the overgroup.  Similarly, the property that the embedding is \emph{isometric}, given that $G$ is finitely generated, in layman's term would mean that the large group $H$ carries knowledge about the geometry of the group $G$.  Namely, with respect to certain finite generating sets of $H$ and $G$, the word-metric distance between elements of $H$ coincides with the word-metric distance with respect to the word metric of $G$. Alternatively, one can say that $H$ knows the word metric geometry of $G$. A more relaxed version of the isometric embedding is the  \emph{quasi-isometric} embedding. The embedding of $G$ into $H$ is quasi-isometric, if the word metric of $G$ is comparable to the metric induced by the word metric of $H$ up to multiplicative and additive correction terms, see Definition \ref{D:quasi-isometry} below.
 
 One can ask what other properties the large group $H$ ``knows" about $G$. In particular, one could ask about algorithmic information that $H$ knows about $G$. A left-order on a group $G$ is \emph{computable} if there is an algorithm to decide whether a given group element is larger than, smaller than or equal to the identity. In \cite{ourpaper},   we showed the following:

 \begin{prop}[Darbinyan-Steenbock, \cite{ourpaper}]
 \label{prop-0}
 If the group $G$ from Theorem \ref{thm-0} has {$a$} computable left-order, then $H$ can be chosen {to be} with a computable left-order.
 \end{prop}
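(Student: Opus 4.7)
The plan is to revisit the proof of Theorem~\ref{thm-0} from \cite{ourpaper} and verify that, when the left-order on $G$ is given by an algorithm, the embedding it constructs can be chosen so that the resulting left-order on $H$ is again computable. A computable left-order on $G$ in particular gives a decidable word problem, hence a recursive presentation with a decidable positive cone, uniformly in the choice of finite generating set.

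First I would recall that the group $H$ of Theorem~\ref{thm-0} is assembled from $G$ by a countable chain of left-order preserving extensions $G = G_0 \hookrightarrow G_1 \hookrightarrow G_2 \hookrightarrow \dots$ whose direct limit is $H$. The building blocks used in \cite{ourpaper} --- typically HNN extensions with order-compatible associated subgroups, certain amalgamated free products, and lexicographic-type extensions over ordered quotients --- share the following feature: if $G_n$ carries a computable left-order and the data specifying the extension (stable letters, relations, and the identifications of the associated subgroups) is given effectively, then $G_{n+1}$ inherits a computable left-order. Concretely, one decides the sign of an element of $G_{n+1}$ by computing its Britton- or Bass--Serre-type normal form and then asking finitely many sign queries in $G_n$; since each such step in \cite{ourpaper} is already explicit, this promotion of computability of the left-order from $G_n$ to $G_{n+1}$ is effective.

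Second, I would organize the whole chain so that it is recursively parametrized in $n$: given a word $w$ in the generators of $H$, one can locate the least $n$ such that $w$ is a word in generators of $G_n$ and then apply the computable left-order of $G_n$. The main obstacle is the simple-making step. In Theorem~\ref{thm-0} simplicity of $H$ is forced by iteratively collapsing proper non-trivial normal subgroups via carefully chosen left-order compatible extensions, and in order for the computability of the left-order to survive, the schedule that picks which element to conjugate into which at each stage must itself be recursively enumerable with respect to the computable data of $G$. Granted that the construction in \cite{ourpaper} admits such a recursive parametrization --- which should be verifiable by unwinding its steps, since each individual extension there is described explicitly --- the chain-of-extensions argument then yields a uniform algorithm that decides the sign of every element of $H$, which is the desired computable left-order.
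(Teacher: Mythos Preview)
The present paper does not actually prove Proposition~\ref{prop-0}: it is quoted verbatim as a result from \cite{ourpaper}, and no argument for it appears here. So there is no ``paper's own proof'' to compare your attempt against; the right comparison target lives in \cite{ourpaper}, not in this survey.

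That said, your proposal has a genuine gap worth flagging. You are guessing at the shape of the construction in \cite{ourpaper} --- a direct limit of HNN extensions and amalgamated free products, with Britton/Bass--Serre normal forms and an iterative ``kill every nontrivial normal subgroup'' scheme --- and then arguing that computability propagates through each such step. But nothing in the paper at hand supports that picture. The techniques visible here (Section~\ref{S:geomeric embeddings}) are wreath-product embeddings in the Neumann style, and the introduction emphasizes that the free-construction approach (Gorjushkin, Schupp) is a \emph{different} route to Hall's theorem from the one the authors take; moreover Thompson's method, which is the closest antecedent, uses homeomorphism groups of Cantor sets rather than HNN towers. So the specific mechanism you propose for tracking computability --- normal forms plus sign queries in $G_n$ --- may simply not apply to the actual construction. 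Your high-level strategy (``check each step of the construction is effective'') is sound, but until you look up what the construction in \cite{ourpaper} actually is, the body of your argument is speculation rather than proof.
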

 In the light of Proposition \ref{prop-0}, it is natural to ask if the group $H$ ``knows'' whether or not $G$ has computable left-order. {More precisely, knowing that $H$ has a computable left-order, does it imply that $G$ also must have a computable left-order?} The answer to this question is positive once we require the group $G$ to be finitely generated, as it follows from Lemma 2.2 in \cite{arman_new}. Namely, if $G$ is finitely generated then the existence of a computable left-order for $H$ implies the existence of such orders for $G$. 
 
For two enumerated sets $\mathcal{A}$ and $\mathcal B$,  a map $\phi: \mathcal A \rightarrow \mathcal B$ is called \emph{computable} if there exists a Turing machine that for each input $i \in \mathbb{N}$ computes the number of the image of the $i$-th element of $\mathcal A$ under $\phi$. This definition naturally extends to what we call \emph{computable} map between groups. Namely, let $G= \langle X \rangle$, $H = \langle Y \rangle$ be two countable groups with enumerated sets of generators $X = \{ x_1, x_2, \ldots \}$ and $Y = \{ y_1, y_2, \ldots \}$, respectively. Then, a map $\phi: G \rightarrow H$ is called \emph{computable} (with respect to this setting), if there exists a Turing machine that for each input $i \in \mathbb{N}$ outputs an element from $(Y \cup Y^{-1})^*$ that represents the image of $x_i$ in $H$. Note that if $X$ is finite (respectively, $Y$ is finite), then computability of $\phi: G \rightarrow H$ implies the existence of a computable map between $G$ and $H$ with respect to any finite (enumerated) generating set of $G$ (respectively, of $H$). 
 In fact, it follows from the construction of our paper \cite{ourpaper} that  the embedding of Proposition \ref{prop-0} {can be made} computable. Also, note that for a finitely generated group with computable left-order, the induced left-order on  any computably embedded subgroup is also computable with respect to the given recursive enumeration of the embedded group. This immediately yields the following theorem.
 \begin{thm}
\label{thm-finitization}
A {countable group} has {a}  computable left-order if and only if it computably embeds into a finitely generated simple group with computable left-order.
\end{thm}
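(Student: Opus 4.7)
The proof is a direct combination of two strands already laid out in the preceding discussion, so my plan is to prove the two directions separately.

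For the forward direction, the plan is to invoke Proposition \ref{prop-0} to obtain, from a given countable group $G$ with computable left-order, a finitely generated left-orderable simple overgroup $H$ that carries a computable left-order. I would then appeal to the remark preceding the theorem, namely that the construction of \cite{ourpaper} underlying Proposition \ref{prop-0} in fact yields a \emph{computable} embedding $\phi: G \hookrightarrow H$ with respect to a recursive enumeration $X = \{x_1, x_2, \ldots \}$ of the generators of $G$ and a fixed finite generating set $Y$ of $H$. Verifying this computability amounts to observing that each stage of the construction in \cite{ourpaper} (the wreath-product-type amalgamations and HNN extensions used to merge the ambient pieces) produces words in $Y \cup Y^{-1}$ representing the images of the $x_i$ in a uniformly Turing-computable way; this is the only genuine technical check, and I expect to state it as a pointer to \cite{ourpaper} rather than redo it here.

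For the backward direction, suppose $G$ computably embeds via $\phi: G \to H$ into a finitely generated simple group $H = \langle Y \rangle$ with a computable left-order $<_H$. I would define $<_G$ on $G$ by pulling back: $g <_G g'$ iff $\phi(g) <_H \phi(g')$. This is a bona fide left-order because $\phi$ is an injective homomorphism. To establish its computability, given an input $i \in \mathbb{N}$ I run the Turing machine witnessing computability of $\phi$ on the generator $x_i$ to obtain a word $w_i \in (Y \cup Y^{-1})^*$ representing $\phi(x_i)$, and then invoke the algorithm deciding $<_H$ on the pair $(w_i, 1)$. Since arbitrary elements of $G$ are given by words in $X \cup X^{-1}$, I concatenate the corresponding $w_i$-words and again appeal to $<_H$, which gives a uniform algorithm deciding $<_G$.

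Combining the two directions proves the equivalence. The most delicate point in the whole argument is the claim that the embedding of Proposition \ref{prop-0} can be made computable; however, since the text already asserts this as a consequence of the construction in \cite{ourpaper}, my plan is to cite it rather than reprove it. All remaining steps are routine bookkeeping about Turing machines, words in generators, and the transfer of an order along an injective homomorphism, and they should not present any real obstacle.
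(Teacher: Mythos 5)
Your proposal is correct and follows essentially the same route as the paper: the forward direction cites Proposition \ref{prop-0} together with the computability of the embedding from \cite{ourpaper}, and the backward direction pulls back the computable left-order of $H$ along the computable embedding, exactly as in the remarks the paper uses to deduce the theorem.
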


It would be interesting to know whether one can omit the condition of computability of the embedding in Theorem \ref{thm-finitization}. In case the embedded subgroup is finitely generated, this is true, because any embedding of a finitely generated group into a finitely generated group is computable. For the general case, we ask the following question.
\noindent
\begin{quest}
\label{quest-1}
 Does there exist a countable group with no computable left-order that embeds into a finitely-generated group with computable left-order?
\end{quest}
\noindent
\begin{rem}
     A left-orderable group without computable left-order but decidable word problem has been constructed in \cite{harrison_left-orderable_2018}. For a finitely generated example, see \cite{arman_new}. As follows from Theorem \ref{thm-finitization} and the observation below it, there is no finitely generated group that serves as a positive answer to Question \ref{quest-1}.
\end{rem}

 Another natural question about ``algorithmic knowledge'' is whether the embedding of $G$ into $H$ \emph{preserves the membership problem}. That is, given a finite word, formed by a given (symmetric) set of generators of {$H$}, whether or not that word represents an element of the image of $G$ in $H$? Informally speaking, decidability of the membership problem means that from the algorithmic point of view the group {$H$} ``knows'' where the image of the group {$G$} sits inside itself.  
 
 The decidability of the membership problem allows {one} to characterise computably left-orderable groups as follows (cf. Corollary 4 of \cite{arman_new}). 
 
  \begin{thm}
 \label{thm-new-1} \label{thm-new-2}
A countable group $G$ has a computable left-order if and only if there exists an embedding of $G$ into a finitely generated computably left-orderable simple group $H$ such that the membership problem for $G$ in $H$ is decidable.
 \end{thm}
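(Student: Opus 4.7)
For the $(\Leftarrow)$ direction, I would fix a finite generating set $Y$ of $H$ and first observe that $H$ has decidable word problem, since a computable left-order lets one decide triviality by checking that a word is neither positive nor negative. Using the assumed decidable membership, the set of words $w \in (Y \cup Y^{-1})^*$ representing elements of $\phi(G)$ is decidable, and enumerating it recursively yields a recursive enumeration of $G$ in which each enumerated generator $g$ comes equipped with a stored $H$-word $w_g$ representing $\phi(g)$. Given any formal word $u$ in this enumeration, I substitute each $g$ by $w_g$ and apply the computable left-order of $H$, recovering the sign of $u$ in $G$. This is the desired computable left-order on $G$.

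For the $(\Rightarrow)$ direction, I would proceed in two stages. In the first stage, I would use a computable refinement of Neumann's wreath-product embedding, in the spirit of Corollary~4 of \cite{arman_new}, to embed the countable group $G$ into a finitely generated left-orderable group $G'$ so that $G'$ carries a computable left-order extending that of $G$ and the membership problem for $G$ in $G'$ is decidable. In the second stage, I would apply Theorem \ref{thm-0} together with Proposition \ref{prop-0} to the finitely generated group $G'$ to obtain an isometric embedding $\iota \colon G' \hookrightarrow H$, with $H$ finitely generated, simple, left-orderable, and equipped with a computable left-order. Since $\iota$ is isometric and $H$ has decidable word problem, membership of $G'$ in $H$ is decidable: given $w$ of $H$-length $\ell$, check whether some $G'$-word of length at most $\ell$ represents $w$ in $H$, a finite search. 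Composing the two embeddings yields $G \hookrightarrow H$ with decidable membership.

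The main obstacle is stage one of the forward direction: constructing a finitely generated $G'$ that simultaneously contains $G$, admits a computable left-order extending the given one on $G$, and has decidable membership for $G$. The delicate point is to arrange the wreath-product normal form so that both the extension of the order and the membership test can be executed effectively; this has essentially been worked out in the proof of Corollary~4 of \cite{arman_new}. Stage two is then essentially automatic from the isometric refinement of Theorem \ref{thm-0}, since isometric embeddings of finitely generated groups into groups with decidable word problem always yield a decidable membership algorithm.
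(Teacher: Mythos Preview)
Your proposal is correct and follows essentially the same route as the paper's proof. In both directions the arguments coincide: for $(\Leftarrow)$ you and the paper enumerate $G$ via the decidable membership in $H$ and pull back the computable order; for $(\Rightarrow)$ you both first invoke Corollary~4 of \cite{arman_new} to pass to a finitely generated computably left-ordered $G'$ with decidable membership of $G$, then embed $G'$ isometrically (the paper says quasi-isometrically, but this comes from the same Theorem~\ref{thm-0}) into a simple computably left-ordered $H$, and finally use that an isometric/quasi-isometric embedding into a group with decidable word problem has decidable membership---which is exactly the content of the paper's Lemma~\ref{L: qi membership}.
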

\begin{rem}
In Theorem \ref{thm-new-1}, we can in addition require the embeddings to be Frattini, and isometric whenever $H$ is finitely generated. Theorem \ref{thm-new-1}  is a version of Theorem \ref{thm-finitization}, where  we do not need to require that the embedding  is computable. 
\end{rem}


\subsection{Stronger versions}

{Let $\mathcal{G} = (G_1, G_2, \ldots )$ be an enumerated family (i.e., collection) of finitely generated groups.} By a result of Higman-Neumann-Neumann \cite{higman_embedding_1949}, there is a finitely generated group $H$ such that  every group {in} $\mathcal{G}$ embeds into $H$. An embedding construction of Neumann, in addition, preserves left-orderability \cite{neumann_embedding_1960}. Theorem \ref{T: family computable left order} and \ref{T: family word problem} below can be regarded as strengthened extensions of these results for countable families of groups.

\begin{df}
{Suppose that the groups in $\mathcal{G}$ have recursive presentations. 
  For $i = 1, 2, \ldots$, let $\mathcal{TM}_i$ be a Turing machine that lists generators and corresponding relators of the group $G_i$, and let $\xi: \mathbb{N} \rightarrow \mathbb{N}$ be defined as $ \xi: i \mapsto \mathcal{TM}_i$. Then, if $\xi$ is a computable map, we call $\mathcal{G}$ a \emph{recursively enumerated family of groups}.  } 
 {If, in addition, the groups in $\mathcal{G}$ have decidable word problem and the Turing machines $\mathcal{TM}_i$ also solve the word problem of $G_i$, then we call the enumerated family of groups $\mathcal{G}$ \emph{recursively enumerated family of groups with decidable word problem.} } 
 {Analogously, we say that $\mathcal{G}$ is a \emph{recursively enumerated family of groups with computable left-order} if the groups in $\mathcal{G}$ have computable left-orders and for $i = 1, 2, 3, \ldots$, the Turing machine $\mathcal{TM}_i$, in addition, constructs a recursive enumeration of the positive-cone for some computable left-order of $G_i$.   }
\end{df}

\begin{thm}\label{T: family computable left order} 
For {every} countable family $\mathcal{G}$ of finitely generated left-orderable groups, there exists a left-orderable finitely generated simple group $H$ such that every $G\in \mathcal G$ quasi-isometrically and Frattini embeds into $H$. If, {in addition, $\mathcal{G}$ is a recursively enumerated family of groups with computable left-order}, then $H$ {has a computable left-order}.
\end{thm}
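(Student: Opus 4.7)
The plan is to reduce the family case to the single-group case handled by Theorem~\ref{thm-0} and Proposition~\ref{prop-0}, using a free product to combine all groups in $\mathcal{G}$ into one countable left-orderable group.

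First I would form the free product $K := \ast_{G \in \mathcal{G}} G$. By Vinogradov's theorem (free products of left-orderable groups are left-orderable), $K$ is a countable left-orderable group. Each factor inclusion $G_i \hookrightarrow K$ is isometric with respect to the canonical generating sets and, by the classical conjugacy theorem for free products (two elements of a factor are conjugate in the free product if and only if they are conjugate in the factor), also Frattini. I would then apply Theorem~\ref{thm-0} to the countable left-orderable group $K$ to obtain a finitely generated left-orderable simple group $H$ together with a Frattini embedding $K \hookrightarrow H$. Since Frattini embeddings compose, each composite $G_i \hookrightarrow K \hookrightarrow H$ is Frattini.

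The main obstacle is upgrading this composite to a quasi-isometric embedding $G_i \hookrightarrow H$. Since $K$ is typically not finitely generated, the isometric ``moreover'' clause of Theorem~\ref{thm-0} does not directly apply to $K$. However, the construction of $H$ in \cite{ourpaper} proceeds through a controlled combination of HNN-type extensions and graded small-cancellation quotients built over an (infinite) generating set of $K$, and the bulk of the work would be to verify that this construction keeps each prescribed finite subset of the generators of $K$ (and in particular, the finite generating set of each factor $G_i$) undistorted up to a linear function. This is the step I expect to be most delicate, as it requires tracking normal forms of elements of $G_i$ through the successive small-cancellation quotients and bounding their word lengths in the generators of $H$ linearly in terms of their $G_i$-lengths, uniformly over $i$.

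For the computable clause, assuming $\mathcal{G}$ is a recursively enumerated family with computable left-orders, I would first show that the Vinogradov left-order on $K$ is computable: given a word $w$ in the generators of $K$, reduce it to free-product normal form (uniformly in $i$ using the Turing machines $\mathcal{TM}_i$ that solve the word problem of each $G_i$), and then determine its sign from the computable left-orders on the factors visited by the normal form. Applying Proposition~\ref{prop-0} to $K$ endowed with this computable left-order then yields an $H$ equipped with a computable left-order, completing the argument.
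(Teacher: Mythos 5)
There is a genuine gap at exactly the point you flag as ``most delicate'': the quasi-isometric clause. Your route goes through the free product $K=\ast_i G_i$, which is infinitely generated whenever $\mathcal G$ is infinite, so the ``moreover'' (isometric) part of Theorem \ref{thm-0} is unavailable, and nothing in the results you invoke controls the distortion of the finitely generated factors $G_i$ inside $H$. You propose to re-open the construction of \cite{ourpaper} and track normal forms through the small-cancellation quotients to show each finite set of generators of $K$ stays undistorted, but this is precisely the missing proof, not a reduction to stated results; embeddings of infinitely generated groups into finitely generated ones generically distort prescribed subgroups unless the embedding is engineered for it. The paper avoids this by inserting an intermediate \emph{finitely generated} group: it forms $G=\bigoplus_i G_i$, embeds it into a $2$-generated subgroup $\widetilde H$ of the double wreath product $G\wr\mathbb Z\wr\mathbb Z$, and proves by an explicit length estimate (Proposition \ref{P: qi arman}) that each $G_i\hookrightarrow\widetilde H$ is bi-Lipschitz, hence quasi-isometric, and Frattini. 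Only then is Theorem \ref{thm-0} (resp.\ Theorem 1 of \cite{ourpaper}) applied to the finitely generated group $\widetilde H$, and the quasi-isometric and Frattini properties follow by composition. Your proposal has no analogue of this non-distortion step.

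The computability clause has a secondary problem. Your plan to compute the Vinogradov order on $K$ by reducing a word to free-product normal form and then reading off ``its sign from the factors visited'' is not well defined: the sign of an element of a free product is not determined by the signs of its syllables (already in $\mathbb Z\ast\mathbb Z$ the element $ab^{-1}$ with $a,b$ positive has no sign dictated by this rule), so you would need to exhibit a specific, effectively computable order on the free product, which is a nontrivial extra argument. The paper sidesteps this entirely: for the direct sum $G=\bigoplus_i G_i$ a computable left-order is immediate from the recursively enumerated family of computable orders on the $G_i$, this passes to a computable left-order on $\widetilde H$ by Proposition \ref{P: geometric and computable}(3), and then Theorem 2 of \cite{ourpaper} gives $H$ with a computable left-order. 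Your Frattini argument via conjugacy in free products is fine, but by itself it only yields the Frattini part of the statement.
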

In \cite{ourpaper}, we also proved a finitely generated version of the Bludov-Glass theorem \cite{bludov_word_2009} for left-ordered groups with recursively enumerated positive cone{s}. Moreover, the flexibility of our method allowed us to prove a quasi-isometric version of a theorem of Thompson \cite{thompson_word_1980} about embeddings into finitely generated simple groups with decidable word problem. 
{The statement that every finitely generated group quasi-isometrically embeds in a finitely generated simple group also appears as Theorem C of \cite{belk_twisted_2022}.} 
Here we observe that a similar statement is true for recursively enumerable families as well.  
\begin{thm}\label{T: family word problem}
For every recursively enumerable family $\mathcal{G}$ of finitely generated groups with decidable word problem, there exists a finitely generated simple group $H$ with decidable word problem such that each $G\in \mathcal G$  both quasi-isometrically and Frattini embeds into $H$. 
\end{thm}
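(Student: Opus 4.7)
The plan is to reduce Theorem \ref{T: family word problem} to the single-group quasi-isometric version of Thompson's theorem that the authors already have (stated as a consequence of the constructions of \cite{ourpaper}, also in Theorem C of \cite{belk_twisted_2022}). That single-group result Frattini and quasi-isometrically embeds any finitely generated group with decidable word problem into a finitely generated simple group with decidable word problem. So it suffices to build, out of the recursively enumerated family $\mathcal{G}=(G_1,G_2,\ldots)$, one finitely generated overgroup $\widetilde{G}$ with decidable word problem that contains every $G_i$ as a quasi-isometrically and Frattini embedded subgroup; composing with the single-group theorem then produces the required simple group $H$.

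First I would combine the $G_i$ into a \emph{countable} group via the free product $\bar{G} = *_{i\geq 1} G_i$. With respect to the union of finite generating sets $\bigcup_i X_i$, each canonical inclusion $G_i \hookrightarrow \bar{G}$ is literally isometric, and by the standard conjugacy theorem for free products (Magnus--Karrass--Solitar), two elements of $G_i$ are conjugate in $\bar{G}$ iff they are conjugate in $G_i$, so the embedding is Frattini. Using the recursive enumeration of $\mathcal{G}$ with decidable word problems (i.e.\ the uniform Turing machines $\mathcal{TM}_i$), one obtains a uniform algorithm that, on any input word in the generators of $\bar{G}$, identifies the finitely many factors involved and reduces to free-product normal form using the word-problem solvers of those factors. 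Hence $\bar{G}$ has decidable word problem.

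Second I would pass from $\bar{G}$ to a finitely generated group $\widetilde{G}$ with decidable word problem, preserving the quasi-isometric Frattini embeddings of each $G_i$. Here I would invoke the finitely-generated embedding machinery already deployed in \cite{ourpaper} (and compatible with the constructions of \cite{belk_twisted_2022}), which is a quasi-isometric and word-problem-preserving variant of Higman--Clapham embeddings. The uniformity of the embedding, together with the fact that each $G_i$ sits isometrically in $\bar{G}$, guarantees that the composite embedding $G_i \hookrightarrow \bar{G} \hookrightarrow \widetilde{G}$ is quasi-isometric (with constants that may depend on $i$, but this is allowed by the statement) and remains Frattini. Decidability of the word problem in $\widetilde{G}$ follows from the analogue of Clapham's theorem combined with the decidability of the word problem in $\bar{G}$.

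Finally, applying the quasi-isometric Frattini-into-simple embedding theorem from \cite{ourpaper} to $\widetilde{G}$ yields a finitely generated simple group $H$ with decidable word problem, together with a quasi-isometric Frattini embedding $\widetilde{G}\hookrightarrow H$. Composing with the embeddings above gives the required quasi-isometric Frattini embedding of each $G_i$ into $H$. The main technical obstacle lies in the second step: while each $G_i$ sits isometrically in the countable free product, turning this simultaneously into a \emph{uniformly} quasi-isometric embedding into a finitely generated overgroup (without the exponential distortion typical of Higman--Neumann--Neumann embeddings) while keeping the word problem decidable is the heart of the argument and is where the recursive enumerability of $\mathcal{G}$ is used in an essential way to drive the construction uniformly.
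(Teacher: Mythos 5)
Your overall architecture is the same as the paper's: merge the family into one countable group, embed that into a finitely generated group with decidable word problem so that each $G_i$ stays quasi-isometrically and Frattini embedded, and then apply the single-group quasi-isometric Thompson-type theorem of \cite{ourpaper}. (Your use of the free product instead of the direct sum $G=\bigoplus_i G_i$ is immaterial for this theorem.) The problem is the middle step, which you yourself flag as ``the heart of the argument'' and then do not carry out. There is no off-the-shelf ``quasi-isometric and word-problem-preserving variant of Higman--Clapham embeddings'' that you can invoke: Higman--Clapham-type embeddings apply to recursively presented groups and in general distort subgroups badly, Olshanskii's undistorted embedding theorem \cite{olshanskii_distortion_1997} concerns a single finitely generated group embedded into a finitely presented one, and neither gives a single finitely generated overgroup with decidable word problem in which \emph{every} $G_i$ of a recursively enumerated family is undistorted. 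Moreover, your composition argument is a non sequitur: since $\bar G$ is infinitely generated, ``$G_i$ sits isometrically in $\bar G$ plus uniformity of $\bar G\hookrightarrow\widetilde G$'' does not yield quasi-isometry of $G_i\hookrightarrow\widetilde G$; undistortedness of each $G_i$ in the finitely generated overgroup has to be proved directly, and that is exactly the content you are missing.

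The paper fills this gap explicitly (Propositions \ref{P: qi arman} and \ref{P: geometric and computable}): it embeds $G=\bigoplus_i G_i$ into a $2$-generated subgroup $\widetilde H=\langle s,F\rangle$ of the double unrestricted wreath product $G\wr\mathbb{Z}\wr\mathbb{Z}$ following \cite{darbinyan_group_2015}, and proves the new estimate $|g|_{X_l}\leq |\Psi(g)|_{\{s,F\}}\leq 2^{M_l+2}|g|_{X_l}$ for $g\in G_l$ (the lower bound being the nontrivial part, obtained by rewriting a minimal word in $s,F$ and evaluating the resulting function at $s^1$ and $t^0$); Frattini-ness comes from \cite[Theorem 5.15(4)]{ourpaper}, and decidability of the word problem of $\widetilde H$ from \cite[Theorem 3]{arman_new}, using the recursive enumeration of $\mathcal G$ with uniform word-problem solvers. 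Only after this does the paper apply \cite[Theorem 3]{ourpaper} to $\widetilde H$, exactly as in your final step. So your reduction is the right one, but as written the proof has a genuine gap: the existence of the finitely generated, word-problem-decidable overgroup in which all $G_i$ are simultaneously undistorted and Frattini embedded is asserted, not proved, and is not available from the references in the form you cite.
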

\begin{rem}
In {Theorem \ref{T: family word problem}}, the restriction that $\mathcal{G}$ is recursively enumerable cannot be replaced with the condition that $\mathcal{G}$ is just countable. This follows from a theorem of Boone and Rogers \cite{boone_problem_1966}, stating that there is no universal partial algorithm that solves the word problem in all finitely 
 {presented} groups with decidable word problem. {Moreover, in the statement of Theorem \ref{T: family word problem}, if we do not require $H$ to have decidable word problem, then $\mathcal{G}$ does not need to be recursively enumerable, just countable.}
\end{rem}
Combined with a theorem of Olshanskii \cite{olshanskii_distortion_1997}, {Theorem \ref{T: family word problem}}  implies a modified version of the famous Boone-Higman and Thompson's theorem on characterization of groups with decidable word problem. Namely, we have:
\begin{thm}\label{T: word problem quasi isometry}
A given finitely generated group has decidable word problem if and only if it quasi-isometrically embeds into a quasi-isometrically embedded finitely generated simple subgroup of a finitely presented group.
\end{thm}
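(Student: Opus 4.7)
The plan is to prove the two directions of the equivalence separately, using Theorem \ref{T: family word problem} and Olshanskii's distortion theorem \cite{olshanskii_distortion_1997} for the forward implication, and the classical Boone-Higman theorem for the reverse.

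For the forward direction, I would start with a finitely generated group $G$ with decidable word problem and observe that the singleton family $\{G\}$ is trivially a recursively enumerated family of finitely generated groups with decidable word problem, since a fixed Turing machine solves the word problem of $G$. Applying Theorem \ref{T: family word problem} then yields a finitely generated simple group $H$ with decidable word problem into which $G$ quasi-isometrically (and Frattini) embeds. Since $H$ is itself finitely generated with decidable word problem, Olshanskii's theorem \cite{olshanskii_distortion_1997} would next supply a quasi-isometric embedding of $H$ into a finitely presented group $K$. Composing the two quasi-isometric embeddings, $G$ quasi-isometrically embeds into $H$, and $H$ in turn is a quasi-isometrically embedded finitely generated simple subgroup of the finitely presented group $K$, which is exactly what the statement requires.

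For the reverse direction, I would take $G$ to be a finitely generated group that embeds (quasi-isometrically, though this is not used) into a finitely generated simple subgroup $H$ of some finitely presented group $K$. The quasi-isometry assumptions play no role here: it is enough that $G$ is a finitely generated subgroup of a simple subgroup of a finitely presented group, and the classical Boone-Higman theorem directly yields that such a $G$ has decidable word problem.

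The real content of the theorem is already packaged inside Theorem \ref{T: family word problem} (which delivers the quasi-isometric Frattini embedding into a finitely generated simple group preserving decidability of the word problem) and inside Olshanskii's embedding result; the argument above is just a composition. The main point one has to verify carefully when turning this sketch into a formal proof is that the embedding produced in \cite{olshanskii_distortion_1997} is genuinely a quasi-isometric embedding, i.e.\ has linear distortion, and not merely an embedding with polynomially bounded distortion --- but this is precisely the statement established by Olshanskii for finitely generated groups with decidable word problem, so no genuine obstacle remains.
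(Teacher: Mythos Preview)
Your proposal is correct and matches the paper's approach exactly: the paper does not give a standalone proof of this theorem but simply states that it follows by combining Theorem~\ref{T: family word problem} with Olshanskii's embedding theorem \cite{olshanskii_distortion_1997}, which is precisely the forward direction you outline, while the reverse direction is the classical Boone--Higman argument.
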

\noindent
This answers a question asked by S. Kunnawalkam during the ``Beyond hyperbolicity'' conference.  {The analogous question for the left-orderable groups is left open.}

\begin{quest}
Is there a quasi-isometric version of Boone-Higman's theorem for left-orderable groups? 
\end{quest}

\noindent
\textbf{Acknowledgements.}
The authors have been supported in part by the Austrian Science Fund (FWF) project P35079-N. The authors also would like to thank the anonymous referee and Indira Chatterji for helpful remarks that improved the paper.

\section{Geometric embeddings of families of groups} \label{S:geomeric embeddings}

{We now discuss the proofs of Theorems \ref{T: family computable left order}  and \ref{T: family word problem}.} 
Let $\mathcal{G}$ be a countable family of finitely generated groups. 
The aforementioned construction of Neumann has been adapted to deal with various computability properties of embeddings into a finitely generated group \cite{darbinyan_group_2015}, see also \cite{arman_new}. 

\subsection{Geometric embeddings of countable families of groups}
\label{subsection 2.1.}

For a finitely generated group $G = \langle X \rangle$ and $g \in G$, by $|g|_X$ we denote the length of the shortest word in $(X \cup X^{-1})^*$ that represents $g$ in $G$. Let $G$ be generated by a finite set $X$ and $H$ be generated by a finite set $Y$. 
\begin{df}\label{D:quasi-isometry}
An embedding $\phi: G \hookrightarrow H$ is said to be a \emph{quasi-isometric embedding}  if there exist constants
$C, k > 0$ such that, for all $g\in G$, 
$$\frac{1}{C}|g|_X - k \leq |\phi(g)|_Y \leq C |g|_X + k.$$
\end{df}
Note that the constants $C$ and $k$ depend on the choice of the generating sets. 

\begin{prop}\label{P: qi arman} Let $\mathcal{G}$ be a countable family of finitely generated groups. Then there is a 2-generated group $\widetilde{H}$ such that  every group $G_i$ from $\mathcal{G}$ quasi-isometrically and Frattini embeds into $\widetilde{H}$. Moreover, if all groups in $\mathcal{G}$ are left-orderable, then so is $\widetilde{H}$. 
\end{prop}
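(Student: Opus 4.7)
The plan is to extend Neumann's wreath product embedding \cite{neumann_embedding_1960}, following the metrically controlled refinements of \cite{darbinyan_group_2015}, from a single countable group to a countable family. First, amalgamate the family into a single countable group by setting $G^\ast := \ast_{i \geq 1} G_i$. Each $G_i$ embeds isometrically into $G^\ast$ with respect to its finite generating set $X_i$, and by Vinogradov's theorem on free products of left-orderable groups, $G^\ast$ is left-orderable whenever every $G_i$ is. It therefore suffices to produce a $2$-generated group $\widetilde H$ containing $G^\ast$ such that each $G_i \leq G^\ast$ is quasi-isometrically and Frattini embedded in $\widetilde H$.

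Second, realise $\widetilde H$ inside the unrestricted wreath product $W = G^\ast \wr_{\mathrm{unr}} \mathbb Z = \bigl(\prod_{n \in \mathbb Z} G^\ast\bigr) \rtimes \langle t \rangle$, where $t$ acts by shift. Choose a single element $s$ in the base whose coordinate at a position $p(i,j)$ encodes the $j$-th generator of $G_i$, with $|p(i,j)|$ bounded by a function of $i$ alone. Following the Neumann-type argument of \cite{darbinyan_group_2015}, the subgroup $\widetilde H := \langle s, t\rangle$ contains $G^\ast$ via a diagonal embedding into the coordinate at position $0$. The upper quasi-isometric bound is immediate: each generator $x \in X_i$ is expressible as a word of length at most $C_i$ in $\{s, t\}^{\pm 1}$, obtained by conjugating $s$ by $t^{\pm p(i,j)}$ and using a bounded commutator trick to isolate a single coordinate. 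Hence $|\phi(g)|_{\{s,t\}} \leq C_i |g|_{X_i} + k_i$ for $g \in G_i$. The lower bound follows from the $1$-Lipschitz projection of the base of $W$ onto the coordinate at position $0$, which recovers $G^\ast$, composed with the isometric inclusion $G_i \hookrightarrow G^\ast$ as a free factor.

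The Frattini property follows from the standard analysis of conjugacy in wreath products: if two elements of $G_i \leq G^\ast$ sitting at position $0$ of the base are conjugate in $\widetilde H \leq W$, then the $\mathbb Z$-component of the conjugator is trivial, so the conjugator lies in the base, and projecting to position $0$ forces conjugacy in $G^\ast$; the normal form in the free product then confines the conjugacy to $G_i$ itself. Left-orderability of $W$ is preserved because $\prod_n G^\ast$ carries the lexicographic left-order induced by any left-order on $G^\ast$ together with any ordering of $\mathbb Z$, and the semidirect extension by $\langle t\rangle \cong \mathbb Z$ remains left-orderable.

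The main technical obstacle is to design the element $s$ together with the positional assignment $p(i,j)$ so that the finitely many generators of every $G_i$ are simultaneously accessible at positions bounded in terms of $i$ alone. This is what makes the upper quasi-isometry bound uniform in $|g|_{X_i}$ (uniformity across $i$ being impossible for a countable family). One must also ensure that the encoding does not produce interference between distinct factors that would spoil the lower quasi-isometry bound or the Frattini property, a bookkeeping issue that is handled by the template construction of \cite{darbinyan_group_2015}.
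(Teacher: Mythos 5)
Your reduction to a single countable group is fine and close in spirit to the paper (the paper uses $G=\bigoplus_i G_i$ rather than your free product $G^\ast$; either choice gives isometrically embedded factors, the Frattini property of the factor inclusion, and left-orderability of the ambient group). The genuine gap is in your second step, the passage to a $2$-generated group. You take a \emph{single} unrestricted wreath product $W=G^\ast\wr\mathbb Z$, one base element $s$ whose coordinate at $p(i,j)$ is the $j$-th generator of $G_i$, and claim that ``a bounded commutator trick'' isolates a single coordinate so that $\langle s,t\rangle$ contains $G^\ast$ concentrated at position $0$. This mechanism does not work as described. Inside the base, commutators and products of the shifts $s^{t^a}$ are computed coordinatewise: if you arrange the positions so that two shifts of $s$ overlap in exactly one coordinate, the commutator $[s^{t^a},s^{t^b}]$ has value there equal to a \emph{commutator of two generators}, not a generator; and the commutator with the shift, $[t,s]=s^ts^{-1}$, is supported on the set of positions where $s$ changes value, which for your sparse encoding is never a single point. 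The only standard way to isolate a generator at one coordinate is via half-line step functions $f_i$ (constant equal to $x_i$ on a half-line), for which $[t,f_i]$ is $x_i$ concentrated at one coordinate --- but then you need all the infinitely many $f_i$ as generators, so $\langle t, f_1,f_2,\dots\rangle$ is not $2$-generated. This is exactly why the construction you appeal to in \cite{darbinyan_group_2015} (and the paper's proof) uses the \emph{double} unrestricted wreath product $G\wr\langle t\rangle\wr\langle s\rangle$: one packages $t$ and all the $f_i$ into a single function $F$ with $F(s)=t$ and $F(s^{2^i})=f_i$, and the spacing by powers of $2$ guarantees that $F$ and $F^{s^{2^i-1}}$ have a unique common support point, where the coordinatewise commutator equals $[t,f_i]$, i.e.\ the generator $x_i$ sitting at a single inner coordinate. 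Your single-level shortcut collapses these two stages, and with it the claimed containment $G^\ast\leq\langle s,t\rangle$ disappears.

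Because of this, the remaining steps are moot as written, although they are in the right spirit: your lower quasi-isometry bound (``project to the coordinate at $0$'') is essentially the counting argument the paper carries out carefully for words in $\{s,F\}$, and your Frattini argument (support forces the conjugator into the base, then coordinate $0$ plus the free-product conjugacy theorem) is the correct shape of argument once the embedding really lands at a single coordinate --- the paper instead quotes Theorem 5.15(4) of \cite{ourpaper} for the Frattini property of $\Psi$ and composes with the Frattini inclusion $G_i\leq G$. To repair your proof you would have to reinstate the two-level construction (or give a genuinely different $2$-generation mechanism, e.g.\ via free constructions, but then left-orderability and the metric control require separate, nontrivial arguments, since amalgams of left-orderable groups need not be left-orderable).
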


To prove this proposition, we make some additional observations on the construction in \cite{darbinyan_group_2015}.  
 We agree on the following notation. {We use the following notation of \cite{darbinyan_group_2015}.} For group elements $x,y$, we write $x^y=yxy^{-1}$ and $[x,y]=xyx^{-1}y^{-1}$. 
 
Let $\mathcal{G}=\{G_1,G_2,\ldots \}$. For each group $G_i$ of $\mathcal{G}$ fix a finite generating set $X_i=\{x_{i_1}, \ldots, x_{i_{n_i}}\}$. Let 
$$
G=\bigoplus_i G_i.
$$ 
We fix $X=\bigcup_i X_i$ to be a generating set of $G$, and renumerate its elements in the most natural way so that $x_1=x_{1_1},x_2=x_{1_2}, \ldots$. 

We now define an embedding of $G$ into a 2-generated subgroup of the double \emph{unrestricted} wreath product $G\wr \mathbb{Z} \wr \mathbb{Z}$. This is done in two steps. 

First we embed $G$ into the unrestricted wreath product $G\wr \mathbb{Z}$. For this purpose, we let $\langle t\rangle\simeq \mathbb{Z}$ and define a function $f_i \in  G^{\langle t\rangle}$ to be the generator $x_i$ on all elements of strictly positive exponent in $\langle t\rangle$, and the identity elsewhere. 

Let $\psi:X\to G\wr\langle t\rangle$ be defined by $\psi(x_i)=[t,f_i]$. 
\begin{lem} The map $\psi$ {extends} to an embedding of $G$ into $G\wr \langle t\rangle$.
\end{lem}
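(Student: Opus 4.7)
The plan is to identify each $\psi(x_i)=[t,f_i]$ with a specific ``delta'' element of the base group $G^{\langle t\rangle}$: a function supported at a single coordinate $t^{k_0}$ of $\langle t\rangle$ and taking value $x_i$ there. From this description, both the homomorphism property of $\psi$ and its injectivity follow by evaluating at that coordinate.

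First I would compute the commutator. Conjugation by $t$ shifts functions in the base group by one position, so $tf_it^{-1}$ and $f_i$ agree at all but exactly one coordinate of $\langle t\rangle$, namely at the boundary of the half-line on which $f_i$ is supported. Reading off $[t,f_i]=(tf_it^{-1})\cdot f_i^{-1}$ at that unique coordinate gives the value $x_i$ (provided the shift convention is oriented correctly), and $[t,f_i]$ is trivial at every other coordinate.

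Next, I would consider the evaluation map $\mathrm{ev}\colon G^{\langle t\rangle}\to G$, $f\mapsto f(t^{k_0})$, which is a group homomorphism satisfying $\mathrm{ev}(\psi(x_i))=x_i$ for every generator. For any word $w$ in $X=\bigcup_i X_i$, the element $\psi(w)$ is a product of base-group elements all supported at $t^{k_0}$; hence it itself lies in the base group, is supported at $t^{k_0}$, and has value $w$ there. Consequently, all defining relations of $G=\bigoplus_i G_i$ (the relations inside each factor $G_j$, and the commutation between generators of distinct factors $G_a$ and $G_b$) are respected by $\psi$, so $\psi$ extends to a homomorphism $G\to G\wr\langle t\rangle$. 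Injectivity is then immediate: $\mathrm{ev}\circ\psi=\mathrm{id}_G$, so $\ker\psi=\{e\}$.

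The only delicate point is the direction of the $\langle t\rangle$-action on the base group: one orientation places the value $x_i$ at the boundary coordinate, giving the clean picture above, while the opposite orientation places $x_i^{-1}$ there and would force $\psi$ to factor through the map $x_i\mapsto x_i^{-1}$ on generators, which need not extend to a homomorphism of a nonabelian group. Once this convention is fixed correctly, the remainder is routine bookkeeping in the wreath product.
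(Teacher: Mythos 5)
Your proposal is correct and follows essentially the same route as the paper: the whole point is the observation that $[t,f_i]$ is the ``delta'' function supported at a single coordinate with value $x_i$, from which the extension and injectivity are immediate (the paper phrases this as $\psi(x)$ being equal to $x$ on $t^0$ and the identity elsewhere, which also settles your orientation caveat --- with the convention $x^y=yxy^{-1}$ the value $x_i$ indeed lands at $t^0$, not $x_i^{-1}$). Your evaluation-map formalism is just a more explicit write-up of the same argument.
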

\begin{proof}  Observe that $\psi(x)$ is equal to $x$ on $t^0$ and the identity elsewhere. Thus $\psi$ extends to an embedding of $G$. 
\end{proof}

Let us now use the embedding $\psi$ to embed $G$ into a 2-generated subgroup of $G\wr \langle t\rangle \wr \langle s\rangle$, where both $t$ and $s$ generate infinite cyclic groups. 

We first define a function $F\in (G\wr \langle t\rangle)^{\langle s\rangle}$ such that $F(s^k)$ is equal to $t$ for $k=1$, equal to $f_i$ for $k=2^i$, $ i>0$, and {to} the identity elsewhere. 

We then define $\Psi:X\to G\wr \langle t\rangle \wr \langle s\rangle$ by $\Psi(x_i)=[F,F^{s^{2^i-1}}]$. We let $\widetilde{H}$ be the subgroup of $G\wr \langle t\rangle\wr \langle s\rangle$ generated by $s$ and $F$. 

  \begin{lem} The map $\Psi$ extends to an embedding of $G$ into $\widetilde{H}$.
\end{lem}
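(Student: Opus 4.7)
The plan is to compute $\Psi(x_i)$ explicitly as an element of the base group $(G\wr\langle t\rangle)^{\langle s\rangle}$ and show that, for every $i$, it is supported at a single common slot of $\langle s\rangle$ with value $\psi(x_i)$ there. Since $\psi$ is already an embedding of $G$ by the previous lemma, this identification transports injectivity from $\psi$ to $\Psi$.

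First I note that $F^{s^{2^i-1}}$ remains in the base $(G\wr\langle t\rangle)^{\langle s\rangle}$ (conjugating a base element by $s^{2^i-1}$ is just a shift along $\langle s\rangle$), so $[F, F^{s^{2^i-1}}]$ lies in the base as well and can be computed coordinate-wise in $G\wr\langle t\rangle$: up to the usual convention for conjugation, its value at $s^k$ is $[F(s^k),\,F(s^{k+2^i-1})]$.

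The combinatorial core is to show that this coordinate-wise commutator is non-trivial at exactly one slot, and that the two factors there are $t$ and $f_i$. For both factors to be non-trivial, $k$ and $k+2^i-1$ must both lie in $\supp(F) = \{1\}\cup\{2^j:j\geq 1\}$. The crucial arithmetic input is a parity argument: for $a,b\geq 1$ the difference $2^a-2^b$ is even and hence cannot equal the odd number $2^i-1$, which rules out all alignments except the pair $(1, 2^i)$. Thus $\Psi(x_i)$ is supported at a single slot of $\langle s\rangle$, common for all $i$, with value $[t, f_i] = \psi(x_i)$.

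Given this, the extension to all of $G$ is immediate: for any word $w$ in $X$, the element $w(\Psi(x_1), \Psi(x_2), \ldots)$ is a base-group function concentrated at that slot with value $\psi(w)$. Hence $\Psi(w)=1$ if and only if $\psi(w)=1$ if and only if $w=1$ in $G$, so $\Psi$ extends to an embedding of $G$ into $\widetilde{H} = \langle s, F\rangle$. The main technical obstacle is the parity/arithmetic verification; the seemingly ad hoc choice of the sparse support $\{1\}\cup\{2^j\}$ together with the shift by $2^i-1$ is specifically tuned to make $\supp(F)$ and its shift overlap in exactly one place, thereby isolating the commutator $[t, f_i]$.
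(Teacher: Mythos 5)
Your proof is correct and follows essentially the same route as the paper: compute $[F,F^{s^{2^i-1}}]$ coordinate-wise, use the arithmetic fact that $k$ and $k+2^i-1$ cannot both be powers of $2$ unless $k=1$ (your parity argument just makes explicit what the paper leaves terse), conclude that $\Psi(x_i)$ is supported at the single slot $s^1$ with value $[t,f_i]=\psi(x_i)$, and transfer injectivity from $\psi$.
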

\begin{proof} Observe that $\Psi(x_i)(s^k)$ is equal to $\psi(x_i)$ for $k=1$. Moreover, it is the identity elsewhere. Indeed, if   $\Psi(x_i)(s^k)\not =1$, then $F(s^k)\not =1$ and $F^{s^{2^i-1}}(s^k)\not=1$. By definition of $F$ this implies that $k$ and $k+2^i-1$ are both {powers} of $2$. This is not possible unless $k=1$. 
\end{proof}

\begin{lem} The embedding $\Psi:G\to \widetilde{H}$ restricts to a Lipschitz embedding $\Psi_l:G_l\to \widetilde{H}$, for $l \in \mathbb{N}$. More precisely, let  
$M_l$ be the largest index of the generators of $G_l$. Then, for all $g\in G_l$, 
$$ |g|_{X_l} \leqslant |\Psi (g)|_{\{s,F\}} \leqslant 2^{M_l+2}|g|_{X_l}.$$  
\end{lem}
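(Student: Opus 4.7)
The plan is to prove the two inequalities separately.

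For the upper bound, I would apply the triangle inequality directly: fix a shortest $X_l$-expression $g = x_{i_1}^{\epsilon_1}\cdots x_{i_n}^{\epsilon_n}$ with $n = |g|_{X_l}$ and each $i_k \le M_l$. Then $\Psi(g)$ factors as the corresponding product of the elements $\Psi(x_{i_k})^{\epsilon_k} = [F, F^{s^{2^{i_k}-1}}]^{\pm 1}$, and expanding each commutator gives a word of length $4\cdot 2^{i_k} \le 2^{M_l+2}$ in $\{s,F\}^{\pm 1}$. Summing over $k$ yields $|\Psi(g)|_{\{s,F\}} \le 2^{M_l+2}\,|g|_{X_l}$.

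For the lower bound, the plan is a two-step ``consolidate-and-evaluate'' argument that peels off one wreath layer at a time. Let $w$ be any word of length $n$ in $\{s,F\}^{\pm 1}$ that represents $\Psi(g) \in (G\wr \langle t\rangle)^{\langle s\rangle} \rtimes \langle s\rangle$. Since $\Psi(g)$ lies in the kernel of the projection onto $\langle s\rangle$, the net $s$-exponent of $w$ vanishes; and pushing every $s^{\pm 1}$-letter past the subsequent $F^{\pm 1}$-letters via $s^cF=F^{s^c}s^c$ produces the base-level identity
$$\Psi(g) \;=\; \prod_{j=1}^{r} (F^{\epsilon_j})^{s^{c_j}},$$
where $r$ is the number of $F^{\pm 1}$-letters of $w$ and $c_j$ is the partial $s$-exponent immediately preceding the $j$-th such letter. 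Evaluating both sides at $s^1$ gives
$$\psi(g) \;=\; \Psi(g)(s^1) \;=\; \prod_{j=1}^{r} F^{\epsilon_j}(s^{1+c_j}),$$
and by the definition of $F$ each non-trivial factor on the right is $t^{\epsilon_j}$ (when $1+c_j = 1$) or $f_i^{\epsilon_j}$ (when $1+c_j = 2^i$ with $i \ge 1$). Thus $\psi(g)$ is represented by a word of length at most $r \le n$ in $\{t, f_1, f_2, \ldots\}^{\pm 1}$.

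I would then repeat exactly the same trick one floor down, inside $G \wr \langle t\rangle$. Because $\psi(g)$ is a product of commutators, it again lies in the base $G^{\langle t\rangle}$, so the net $t$-exponent of the $(t, f_i)$-word just produced is zero; the analogous consolidate-and-evaluate step at $t^0$ writes $g = \psi(g)(t^0)$ as a product of values $f_{i_j}^{\epsilon_j}(t^{d_j})$, and by the definition of $f_i$ each such value is either the identity or $x_{i_j}^{\epsilon_j}$. This displays $g$ as a word of length at most $n$ in $X^{\pm 1}$ inside $G = \bigoplus_i G_i$. Since $g \in G_l$, projecting this word onto the $l$-th summand kills every letter from $X \setminus X_l$ without increasing length, yielding an $X_l$-word of length at most $n$ for $g$. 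Hence $|g|_{X_l} \le n = |\Psi(g)|_{\{s,F\}}$.

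The main bookkeeping to nail down is keeping the wreath-product shift conventions consistent at both levels, so that the shifts $1+c_j$ and $d_j$ meet the supports $\{s^{2^i}\}_{i \ge 0}$ of $F$ and $\{t^m\}_{m \ge 1}$ of $f_i$ in exactly the right direction; once that is settled, everything reduces to the elementary observation that the coordinate projection $G \to G_l$ is the identity on $X_l$ and trivial on the remaining $X_k$'s.
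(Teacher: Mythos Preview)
Your argument is correct and follows essentially the same two-step ``consolidate-and-evaluate'' strategy as the paper: rewrite a minimal $\{s,F\}$-word as a product of $s$-conjugates of $F^{\pm1}$, evaluate at $s^1$ to obtain a word in $\{t,f_1,f_2,\ldots\}^{\pm1}$ representing $\psi(g)$, repeat inside $G\wr\langle t\rangle$, and evaluate at $t^0$. Your final projection step $G\to G_l$ is in fact a bit more explicit than the paper's version, which passes directly from ``a generating element'' to $|g|_{X_l}\le s$; one small cosmetic point is that your closing line should read $|g|_{X_l}\le n$ for every representing word, hence $\le |\Psi(g)|_{\{s,F\}}$, rather than asserting $n=|\Psi(g)|_{\{s,F\}}$ for an arbitrary $w$.
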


\begin{proof}
The right hand side of the inequality is by definition of the embedding $\Psi$. To prove the left hand side we proceed as follows. 
Let 
$$w=s^{\alpha_1}F^{\beta_1}s^{\alpha_2}F^{\beta_2}\cdots s^{\alpha_{n-1}}F^{\beta_{n-1}}s^{\alpha_n}$$ be a word of minimal length  that represents $\Psi(g)$, where $g\in G_l$, $l\in \mathbb{N}$. 

 As  $s^{\alpha}F=(F^{s^{\alpha}})s^{\alpha}$, the word $w$ is freely equal to a word of the form
$$
w'=(F^{s^{\gamma_1}})^{\epsilon_1}(F^{s^{\gamma_2}})^{\epsilon_1}\cdots (F^{s^{\gamma_{N}}})^{\epsilon_N}s^\gamma,
$$
where $\gamma_1, \ldots, \gamma_N \in \mathbb{Z}$, $\epsilon_1, \ldots, \epsilon_N \in \{\pm 1\}$.
Without loss of generality, we assume that $(\gamma_i, \epsilon_i) \neq (\gamma_{i+1}, -\epsilon_{i+1})$, $i = 1, \ldots, N-1$, so that $|w|, |w'| \geq N$.

 As $w$ represents $\Psi(g)$ in $\widetilde{H}$, it must be that $\gamma = 0$.   Let $y_i = (F^{s^{\gamma_i}})^{\epsilon_i}(s^1) \in  G \wr \langle t \rangle$. Then, depending on the value of $\gamma_i$, $y_i$ is either equal to $t^{\pm 1}$ or to $(f_j)^{\pm 1} \in G\wr \langle t\rangle$ for some $j \in \mathbb{N}$ or it is equal to the identity.
 Therefore, by using the relation $tf_i =f_i^t t$, one obtains that 
 $$y_1\cdots y_N = (f_{n_1}^{t^{m_1}})^{\epsilon_1}(f_{n_2}^{t^{m_2}})^{\epsilon_2} \ldots (f_{n_s}^{t^{m_s}})^{\epsilon_s}t^m,$$
for some $0 \leq s \leq N$, $n_1, \ldots, n_s \in \mathbb{N}$, $m, m_1, \ldots, m_s \in \mathbb{Z}$, and $\epsilon_1, \ldots, \epsilon_s \in \{\pm 1\}$.

  Recall that $\Psi(x_i)(s^1)$ is equal to $\psi(x_i)$. Thus $\Psi(g)(s^1)=y_1\cdots y_N$ is equal to $\psi(g)$. By definition of $\psi$, we conclude that $m=0$.

Finally, since $f_{n_1}^{t^{m_1}} (t^0)$ is either $1_{G_l}$ or a generating element from $X_l$, we get that $y_1 \ldots y_N (t^0) = g$ implies $s \geq |g|_{X_l}$. Therefore, $N \geq  |g|_{X_l}$, hence $ |g|_{X_l} \leqslant |\Psi (g)|_{\{s,F\}}$.
 \end{proof}

 The embedding of $G$ into $\widetilde{H}$ is Frattini \cite[Theorem 5.15(4)]{ourpaper}. As the embedding of $G_i$ into $G$ is Frattini, so is the embedding of $G_i$ into $\widetilde{H}$.  This finishes the proof of Proposition \ref{P: qi arman}.

\subsection{Computability aspects} We now assume that $\mathcal{G}$ is a recursively enumerated family of groups with decidable word problem.
We now comment on computability properties of the embedding of Proposition \ref{P: qi arman} under this additional assumption. 

{A} countable (but not necessarily finitely generated) group is \emph{computable} with respect to a fixed recursively enumerated generating set if there exists an algorithm that decides whether or not a finite word, formed by the generators and their inverses, represents the trivial element of the group. Observe that $$
G=\bigoplus_i G_i
$$ 
is a computable group with respect to the generating set $X=\bigoplus_i X_i$, because $\mathcal G$ is a recursively enumerated family of groups with decidable word problem.  
If, in addition, $\mathcal G$ is a recursively enumerated family of groups with computable left-order, then $G$ is computably left-orderable.

\begin{prop}\label{P: geometric and computable} Let $\mathcal{G}$ be a countable family of finitely generated groups. Let $G$ be defined as above, and let $\Psi: G \hookrightarrow \widetilde{H}$ be defined as in Subsection \ref{subsection 2.1.}. Then the following statements holds: 
\begin{enumerate}
    \item If $\mathcal G$ is recursively enumerated, then the induced embedding {of $G_i$ into $\widetilde{H}$} is computable for every $G_i \in \mathcal G$.
    \item If $\mathcal{G}$ is a recursively enumerated family of finitely generated groups with decidable word problem, then $\widetilde{H}$ has decidable word problem, and the membership problem for the image of $G_i$ in $\widetilde{H}$ is {decidable} for every group $G_i$ in $\mathcal{G}$.
    \item  If $\mathcal{G}$ is a recursively enumerated family of finitely generated groups with a computable left-order, then $\widetilde{H}$ has a computable left-order.
\end{enumerate}
\end{prop}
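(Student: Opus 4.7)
My strategy for all three parts is to treat every element $w$ of $\widetilde H \leq G\wr\langle t\rangle\wr\langle s\rangle$ as a two-level function on $\langle s\rangle\times\langle t\rangle$ with values in $G$, and to argue that from the word $w\in\{s,F\}^{*}$ one can effectively compute its (finite) support at each level together with the value at every support point, each such value being an element of $G=\bigoplus_i G_i$ written as a word in $X=\bigcup_i X_i$. Given this, the three items reduce to effective computations in $G$, which in turn reduce to the recursive enumeration data $\mathcal{TM}_i$ of $\mathcal{G}$: a word in $X$ mentions only finitely many $G_i$, whose global indices can be read off from the word, after which one runs the appropriate Turing machine $\mathcal{TM}_i$.

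For part (1), the formula $\Psi(x_i)=[F,F^{s^{2^i-1}}]$ is explicit. Assuming $\mathcal{G}$ is recursively enumerated, the function sending a pair (group index $j$, local generator index $k$) to the global index $i$ of the $k$-th generator of $G_j$ in the ordered set $X$ is computable. A Turing machine can therefore read an input word in the generators of $G_j$ and output the corresponding word in $\{s,F\}^{*}$ by concatenating the relevant commutators.

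For part (2), given $w\in\{s,F\}^{*}$, I will first reduce $w$ to a normal form $(F^{s^{\gamma_1}})^{\epsilon_1}\cdots (F^{s^{\gamma_N}})^{\epsilon_N}s^{\gamma}$ exactly as in the proof of the Lipschitz lemma, so that both the support of $w$ on $\langle s\rangle$ and the value $w(s^k)\in G\wr\langle t\rangle$ at each support point can be computed from $w$. Iterating the same unfolding one level down produces a finite list of elements of $G$, each presented as a word in $X$. Then $w=1$ in $\widetilde H$ iff $\gamma=0$ and every such element is trivial in $G$, which is decidable by the first paragraph. For the membership problem in $\Psi(G_j)$, by the embedding lemma $w$ represents an element of $\Psi(G_j)$ iff its support on $\langle s\rangle$ is $\{s^{1}\}$, the value at $s^{1}$ has support $\{t^{0}\}$ on $\langle t\rangle$, and the innermost value lies in $G_j\leq G$. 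The last condition is decidable because it amounts to checking that the projections of the inner word onto the finitely many $G_k$ with $k\neq j$ whose generators occur in it are all trivial, each of which is decidable by $\mathcal{TM}_k$.

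For part (3), I will pull back to $\widetilde H$ the standard iterated lexicographic left-order. On $G=\bigoplus_i G_i$ I take the lex order by increasing index $i$, which is computable by the method of the first paragraph using the computable positive cones on the $G_i$. On $G\wr\langle t\rangle$, and then on $\widetilde H\leq G\wr\langle t\rangle\wr\langle s\rangle$, I use the classical wreath-product lex order: compare at the extremal support point of the function part and fall back to the $\mathbb{Z}$-order in the outer factor when the function part is trivial. Left-invariance is standard, and computability of the resulting order on $\widetilde H$ follows because supports are finite and effectively computable from $w$, so the comparison terminates in a base case inside some $G_i$. The main obstacle is bookkeeping rather than conceptual: I must carefully verify that from $w$ the relevant supports and pointwise values are genuinely computable, and that the iterated lex order remains left-invariant after restriction to $\widetilde H$; once these uniform effective routines for the two-level function view of $\widetilde H$ are set up, all three items follow.
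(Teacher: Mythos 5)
Your part (1) is fine, and your general plan (reduce everything to effective computations in $G=\bigoplus_i G_i$ and the machines $\mathcal{TM}_i$) is reasonable, but the central premise on which parts (2) and (3) rest is false: elements of $\widetilde H$ do \emph{not} have finite support at each level. By construction $f_i$ equals $x_i$ on \emph{all} positive powers of $t$, and $F$ is nontrivial at $s^1$ and at every $s^{2^i}$, so both have infinite support. Consequently a product $(F^{s^{\gamma_1}})^{\epsilon_1}\cdots(F^{s^{\gamma_N}})^{\epsilon_N}s^{\gamma}$ is a priori nontrivial at infinitely many powers of $s$, and its triviality is \emph{not} equivalent to $\gamma=0$ together with triviality of a finite list of values ``at the support points'' -- there is no finite support to compute. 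The same issue undermines your membership test (verifying that the support is contained in $\{s^1\}$, and then in $\{t^0\}$, is an infinite family of conditions) and your order in (3): an infinitely supported function has no ``extremal support point'', so the classical lex order on restricted wreath products does not transfer verbatim to the unrestricted products used here.

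The missing content is precisely the structural analysis that turns these infinite checks into finite ones: at the $t$-level the $f_i$ are step functions (constant on a tail), so a product of conjugates of $f_i^{\pm1}$ and powers of $t$ is determined by its values on finitely many computable intervals; at the $s$-level, since $\supp(F)$ consists of powers of $2$, two distinct shifts $\gamma\neq\gamma'$ are simultaneously active only at the finitely many $k$ with $2^a-2^b=\gamma'-\gamma$, while at every other active position the value is a power of a single $t$ or $f_a$ with exponent the signed multiplicity $e_\gamma=\sum_{j:\gamma_j=\gamma}\epsilon_j$ (and the factor $t^{e_\gamma}$ occurring at $k=\gamma+1$ forces $e_\gamma=0$ for trivial elements). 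Only after this analysis (or by quoting the prior results, which is what the paper does: $G$ is computable, resp.\ computably left-ordered, when $\mathcal G$ is recursively enumerated with decidable word problem, resp.\ computable left-order, and then \cite[Theorem 3]{arman_new} and \cite{darbinyan_group_2015} give computability of $\Psi$, decidability of the word and membership problems, and a computable left-order on $\widetilde H$; membership for $G_i$ can alternatively be obtained from the quasi-isometric embedding via Lemma \ref{L: qi membership}) do your three reductions to finitely many computations in finitely many $G_i$'s actually go through. For (3) you would in addition have to replace ``extremal'' by ``minimal'' support point (the supports arising in $\widetilde H$ are infinite but bounded below), and the computability of the sign again requires the effective triviality test above, so as written part (3) also has this gap.
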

\begin{proof} We apply the results of \cite{darbinyan_group_2015,arman_new,ourpaper} to the embedding $\Psi:G\to \widetilde{H}$: the  embedding  is computable, 
 \cite[Theorem 3(a)]{arman_new}, hence, the induced embedding of $G_i$ into $\widetilde{H}$ is computable for all $G_i\in \mathcal{G}$. Under the assumption of assertion (2) of the proposition, $G$ is computable with respect to the generating set $X$, hence, $\widetilde{H}$ has decidable word problem  \cite{darbinyan_group_2015}. 
 Moreover, the membership problem for the image of $G$ in $\widetilde{H}$ is decidable \cite[Theorem 3(e)]{arman_new}. As the membership problem for the groups $G_i$ in $G$ is decidable, then the membership problem for the image of $G_i$ in $\widetilde{H}$ is decidable. 
  Alternatively, as the groups $G_i$ quasi-isometrically embed in $\widetilde{H}$ by Proposition \ref{P: qi arman} the membership problem is decidable, see Lemma \ref{L: qi membership} below.
Under the assumptions of (3) of the proposition, $G$ is computably left-ordered, hence, $\widetilde{H}$ is computably left-ordered  \cite[{Theorem 3(f)}]{arman_new}.
\end{proof}

\begin{proof}[Proof of Theorem \ref{T: family computable left order}]
{Let $\mathcal{G}$ be an enumerated family of finitely generated left-orderable groups. Let $\widetilde{H}$ be the finitely generated left-ordered group given by Proposition \ref{P: geometric and computable}. By \cite[Theorem 1]{ourpaper} there is a quasi-isometric Frattini embedding of $\widetilde{H}$ into a finitely generated simple left-orderable group $H$. Since by Proposition 2.1 the embedding of every group $G_i$ from $\mathcal{G}$ into $\widetilde{H}$ is quasi-isometric and Frattini, so is the embedding of $G_i$ into the finitely generated simple group  $H$. If $\mathcal{G}$ is a recursively enumerated family of finitely generated computably left-ordered groups, then $G$ has a computable left-order. Theorem 2 of \cite{ourpaper} then implies that $H$ can be chosen such that, in addition, $H$ has a computable left-order.
}
\end{proof}

\begin{proof}[Proof of Theorem \ref{T: family word problem}]
{
Let $\mathcal{G}$ be a recursively enumerated family of groups with decidable word problem. Let $\widetilde{H}$ be the finitely generated group with decidable word problem given by Proposition \ref{P: geometric and computable}. By \cite[Theorem 3]{ourpaper} there is a quasi-isometric Frattini embedding of $\widetilde{H}$ into a finitely generated simple group $H$ with decidable word problem. As the embedding of every group $G_i$ in $\mathcal{G}$ into $\widetilde{H}$ is quasi-isometric and Frattini, so is the embedding of $G$ into $H$.
}
\end{proof}

\section{Proof of Theorem \ref{thm-new-1}}

 In this section we discuss the proof of Theorem \ref{thm-new-1}.
 
\begin{lem}\label{L: qi membership}
Let $G$ be a finitely generated group. If $G$ quasi-isometrically embeds into a finitely generated group $H$ with decidable word problem, then the membership problem for $G$ in $H$ is decidable.  
\end{lem}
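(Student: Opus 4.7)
The plan is to use the quasi-isometric embedding to turn the membership problem into a finite search that can be verified using the decidable word problem of $H$. Fix finite symmetric generating sets $X$ for $G$ and $Y$ for $H$, and let $\phi \colon G \hookrightarrow H$ be the quasi-isometric embedding with constants $C, k > 0$ from Definition \ref{D:quasi-isometry}. Since $G$ is finitely generated, the embedding is completely determined by the (finite) data $\phi(x_1), \ldots, \phi(x_n) \in H$, each of which we can fix as an explicit word in $(Y \cup Y^{-1})^*$.

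The algorithm for the membership problem is as follows. Given an input word $w \in (Y\cup Y^{-1})^*$ of length $n$, set $N := \lceil C(n+k) \rceil$ and enumerate all words $u \in (X \cup X^{-1})^*$ of length at most $N$; for each such $u$, compute the word $\phi(u) \in (Y \cup Y^{-1})^*$ by concatenating the fixed images of the generators, and use the decidable word problem of $H$ to test whether $w \cdot \phi(u)^{-1} =_H 1$. If such a $u$ is found, output yes; if none is found, output no.

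To verify correctness, suppose first that $w$ represents some element $\phi(g) \in \phi(G)$. Then $|\phi(g)|_Y \leq n$, and the lower bound in the quasi-isometry inequality gives
$$|g|_X \leq C\bigl(|\phi(g)|_Y + k\bigr) \leq C(n+k) \leq N,$$
so a shortest word representing $g$ in $G$ appears in our enumeration and the procedure detects it. Conversely, if the procedure outputs yes for some $u$, then $w =_H \phi(u) \in \phi(G)$ by construction, so $w$ really does represent an element of the image of $G$.

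The steps are all essentially routine: the only real content is the observation that the lower bound in the quasi-isometric embedding inequality provides a computable a priori bound on the length of a candidate preimage in $G$, which reduces membership to finitely many word problem queries in $H$. There is no serious obstacle; it is worth noting, however, that without the quasi-isometry assumption (or some analogous control on distortion) such a bound need not exist, and the conclusion can indeed fail, so the hypothesis cannot be weakened to a mere embedding.
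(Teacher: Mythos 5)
Your proof is correct and follows essentially the same route as the paper: use the lower bound of the quasi-isometry inequality to get a computable bound on $|g|_X$ for any candidate preimage, then reduce membership to finitely many word-problem queries in $H$. The only cosmetic difference is that you bound candidate lengths by the length $n$ of the input word rather than first computing $|\overline{w}|_Y$ exactly, which is a harmless (indeed slightly simpler) variant.
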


\begin{proof}  Suppose that the embedding is $(C,k)$-quasi-isometric. Let $Y$ be a finite symmetric generating set for $H$ and let $w$ be a word in $Y$, representing an element $\overline{w}$ of $H$.  Compute the length $|\overline{w}|_Y$. This is possible as the word problem in $H$ is decidable. Make a list of all words in the generators $G$ of length in $[C^{-1}|\overline{w}|_Y-k,C|\overline{w}|_Y+k]$ and check whether or not $w$ is equal to the embedded image of a  word in this list. If it is, $w$ represents a word in $G$. Otherwise, it does not.
\end{proof}

\begin{proof}[Proof of Theorem \ref{thm-new-1}]  Let {$G$} 
  be a computably left-ordered  group. {Then, by Corollary 4 of \cite{arman_new}, $G$ can be embedded into a finitely generated computably left-ordered group $G_1$  such that the membership problem for $G$ in $G_1$ is decidable.} By Theorem \ref{thm-0}, $G_1$ quasi-isometrically embeds into a finitely generated simple group $H$ that is computably left-ordered. Recall that this implies that the word problem in $H$ is decidable. Additionally, {by Lemma \ref{L: qi membership}, the membership problem for $G_1$ in $H$ is decidable, which implies that the membership problem for $G$ in $H$ is decidable, too.}

 Inversely, suppose that $H$ has a computable left-order and that the membership problem for $G$ in $H$ is decidable. In this case, there is the following algorithm to show that the order on $G$ induced by $H$ is computable.   First, {list} all elements of $G$ in the following way: the first element of our list is the identity element. We {list} all elements of $H$ and represent each element by a unique word $w$. This is possible by decidability of the word problem. For each element $w$ of this list we proceed as follows. As the membership problem for $G$ in $H$ is decidable, we check whether $w$ represents an element of $G$. If it does, we add it to the list of elements of $G$.  Otherwise, we continue with the next word.   
  
Let us denote the list obtained this way by $L_G$. 
  For every word in  $L_G$, we can now decide whether this word represents the identity, a positive or a negative element. This means that the left-order on $G$ induced from the left-order of $H$ is computable with respect to $L_G$.
\end{proof}

\mbox{}

 \addtocontents{toc}{\setcounter{tocdepth}{-10}}
 
\bibliographystyle{alpha}

\bibliography{eilos}

\begin{thebibliography}{HNN49}

\bibitem[BG09]{bludov_word_2009}
V.~V. Bludov and A.~M.~W. Glass.
\newblock Word problems, embeddings, and free products of right-ordered groups
  with amalgamated subgroup.
\newblock {\em Proc. Lond. Math. Soc. (3)}, 99(3):585--608, 2009.

\bibitem[BR66]{boone_problem_1966}
William~W. Boone and Hartley Rogers, Jr.
\newblock On a problem of {J}. {H}. {C}. {W}hitehead and a problem of {A}lonzo
  {C}hurch.
\newblock {\em Math. Scand.}, 19:185--192, 1966.

\bibitem[BZ22]{belk_twisted_2022}
James Belk and Matthew C.~B. Zaremsky.
\newblock Twisted {B}rin-{T}hompson groups.
\newblock {\em Geom. Topol.}, 26(3):1189--1223, 2022.

\bibitem[Dar15]{darbinyan_group_2015}
Arman Darbinyan.
\newblock Group embeddings with algorithmic properties.
\newblock {\em Comm. Algebra}, 43(11):4923--4935, 2015.

\bibitem[Dar20]{arman_new}
Arman Darbinyan.
\newblock Computability, orders, and solvable groups.
\newblock {\em J. Symb. Log.}, 85(4):1588--1598, 2020.

\bibitem[DS22]{ourpaper}
Arman Darbinyan and Markus Steenbock.
\newblock Embeddings into left-orderable simple groups.
\newblock {\em J. Lond. Math. Soc. (2)}, 105(3):2011--2045, 2022.

\bibitem[Gor74]{gorjuskin_imbedding_1974}
A.~P. Gorju\v{s}kin.
\newblock Imbedding of countable groups in {$2$}-generator simple groups.
\newblock {\em Mat. Zametki}, 16:231--235, 1974.
\newblock English translation: Math. Notes 16 (1974), no. 2, 725--727 (1975).

\bibitem[Hal74]{hall_embedding_1974}
P.~Hall.
\newblock On the embedding of a group in a join of given groups.
\newblock {\em J. Austral. Math. Soc.}, 17:434--495, 1974.
\newblock Collection of articles dedicated to the memory of Hanna Neumann,
  VIII.

\bibitem[HNN49]{higman_embedding_1949}
Graham Higman, B.~H. Neumann, and Hanna Neumann.
\newblock Embedding theorems for groups.
\newblock {\em J. London Math. Soc.}, 24:247--254, 1949.

\bibitem[HT18]{harrison_left-orderable_2018}
Matthew Harrison-Trainor.
\newblock Left-orderable computable groups.
\newblock {\em J. Symb. Log.}, 83(1):237--255, 2018.

\bibitem[Neu60]{neumann_embedding_1960}
B.~H. Neumann.
\newblock Embedding theorems for ordered groups.
\newblock {\em J. London Math. Soc.}, 35:503--512, 1960.

\bibitem[Ol'97]{olshanskii_distortion_1997}
A.~Yu. Ol'shanski\u{\i}.
\newblock On the distortion of subgroups of finitely presented groups.
\newblock {\em Mat. Sb.}, 188(11):51--98, 1997.

\bibitem[Sch76]{schupp_embeddings_1976}
Paul~E. Schupp.
\newblock Embeddings into simple groups.
\newblock {\em J. London Math. Soc. (2)}, 13(1):90--94, 1976.

\bibitem[Tho80]{thompson_word_1980}
Richard~J. Thompson.
\newblock Embeddings into finitely generated simple groups which preserve the
  word problem.
\newblock In {\em Word problems, {II} ({C}onf. on {D}ecision {P}roblems in
  {A}lgebra, {O}xford, 1976)}, volume~95 of {\em Stud. Logic Foundations
  Math.}, pages 401--441. North-Holland, Amsterdam-New York, 1980.

\end{thebibliography}

\end{document}